\newtheorem{theorem}{Theorem}[section]
\newtheorem{lemma}[theorem]{Lemma}
\theoremstyle{definition}
\newtheorem{corollary}[theorem]{Corollary}
\newtheorem{proposition}[theorem]{Propostion}
\theoremstyle{remark}
\newtheorem{remark}[theorem]{Remark}
\newenvironment{Acknowledgement}[1][Acknowledgement]{\noindent\textit{#1.} }
\numberwithin{equation}{section}
\begin{document}
\title [Prescribing the symmetric function of the Schouten tensor]{Prescribing the symmetric function of the eigenvalues of the Schouten tensor}

\author{Yan He}
\address{Centre for Mathematical Sciences, Zhejiang
University, Hangzhou 310027, P. R. China.}

\email{helenaig@zju.edu.cn}

\author{Weimin Sheng }
\address{Department of Mathematics, Zhejiang University, Hangzhou 310027, P. R. China
}
\email{weimins@zju.edu.cn}
\thanks{This work was partially supported by NSFC Grant \# 10771189 and 10831008.}

\subjclass[2010]{Primary 53C21; Secondary 35J60}

\date{February 1, 2010}

\keywords{Conformal geometry, prescribing curvature, Ricci tensor}

\begin{abstract}
In this paper we study the problem of conformally deforming a metric to a
prescribed symmetric function of the eigenvalues of the Schouten tensor on
compact Riemannian manifolds with boundary. We prove its solvability and the
compactness of the solution set, provided the Ricci tensor is non-negative
definite.
\end{abstract}

\maketitle


\bibliographystyle{amsplain}

\section{INTRODUCTION}

Let $(M^{n},g)$ be a smooth, compact Riemannian manifold with totally
geodesic boundary of dimension $n\geq 3$. The Schouten tensor of $g$ is
defined by
\begin{equation*}
A_{g}={\frac{1}{{n-2}}}\left( \mathrm{Ric}_{g}-{\frac{{R_{g}}}{{2(n-1)}}}%
g\right) ,
\end{equation*}%
where $\mathrm{Ric}$ and $R$ are the Ricci and scalar curvatures of $g$,
respectively.

Let $\sigma _{k}:R^{n}\rightarrow R$ be the $k$-th elementary symmetric
function ($1\leq k\leq n$)
\begin{equation*}
\sigma _{k}(x)=\sum_{1\leq i1<\cdots < ik\leq n} x_{i1}\cdots
x_{ik},
\end{equation*}%
$\Gamma_{k}$ the corresponding open, convex cone, i.e. $\Gamma _{k}=$ $%
\left\{ x\in R^{n}|\sigma _{i}\left( x\right) >0,1\leq i\leq k\right\} .$
Let
\begin{equation*}
\Sigma _{\theta }=\left\{ x=\left( x_{1},\cdots ,x_{n}\right) \in
R^{n}|\quad \min x_{i}+\theta \Sigma x_{i}>0\right\} .
\end{equation*}%
Now let us consider the general symmetric function $F$ defining on $\Gamma $
$(\Gamma _{n}\subset \Gamma \subset \Sigma_{\frac{1}{n-2}})$ satisfying

($C_1$) $F$ is positive and $F=0$ on $\partial \Gamma$;

($C_2$) $F$ is concave;

($C_3$) $F$ is invariant under exchange of variables;

($C_4$) $F$ is homogeneous of degree 1;

($C_5$)  $\frac{\partial F}{\partial x_i}\geq \varepsilon\frac{F}{\sigma_1}$ for some constant $\varepsilon>0$ for all $i$;

($C_6$) $F(x)\leq \varrho \sigma_1(x)$ in $\Gamma$ and $F(1,\cdots,1)= n
\varrho$, $\varrho$ is a positive constant.

We need ($C_{1}$)-($C_{4}$) to ensure that the elliptic equations are
solvable. $F=\sigma_k^{1/k}$ satisfies condition ($C_{5}$). The condition ($C_{6}$) says that the
Newton-Maclaurin inequality with respect to function $F$ holds. 

We denote $[g]=\{\tilde{g}\mid \tilde{g}=e^{-2u}g\}$. We call the metric $%
\hat{g}=e^{-2u}g$ (as well as the function $u$) is $\Gamma $-admissible, or
simply admissible, if $\hat{g}\in \left\{ \tilde{g}\in \left[ g\right] \quad
|\quad \lambda (\tilde{g}^{-1}A_{\tilde{g}})\in \Gamma \right\} $. Here, $%
\lambda (\tilde{g}^{-1}A_{\tilde{g}})=\left( \lambda _{1},\cdots ,\lambda
_{n}\right) $ denote the eigenvalues of $\tilde{g}^{-1}A_{\tilde{g}}$.

In this paper we study the existence of some prescribing problems and
the compactness of the solution set. The main result is as follows.

\begin{theorem}
\textit{Let }$(M^{n},g)$\textit{\ be a compact }$n$%
\textit{-dimensional Riemannian manifold with totally geodesic boundary. Let
}$F$\textit{\ be a symmetric function satisfying }$(C_{1})-(C_{6})$\textit{%
\ on }$\Gamma $\textit{\ with }$\Gamma _{n}\subset \Gamma \subset \Sigma _{\frac
{1} {n-2}}$\textit{\ . If the manifold }$(M,g)$\textit{\ is not
conformal equivalent to a hemisphere, then for any positive function }$f$%
\textit{, there exists an admissible conformal metric }$\tilde{g}=e^{-2u}g$%
\textit{\ with totally geodesic boundary satisfying }%
\begin{equation*}
F\left( \lambda (\tilde{g}^{-1}A_{\tilde{g}})\right) =f.
\end{equation*}%
\textit{Additionally, the set of all such solutions is compact in the }$%
C^{m} $\textit{\ -topology for any }$m\geq 0$\textit{.\newline
}
\end{theorem}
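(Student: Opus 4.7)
The plan is to rewrite the equation as a fully nonlinear elliptic Neumann problem in the conformal factor $u$ and attack it via the continuity method; almost all the work will go into establishing a priori estimates independent of the continuity parameter, and compactness will then be a byproduct.

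First I would record the conformal transformation
\begin{equation*}
A_{\tilde g}=A_g+\nabla^2 u+du\otimes du-\tfrac12|\nabla u|^2 g,
\end{equation*}
so the PDE becomes $F(\lambda(g^{-1}(A_g+\nabla^2 u+du\otimes du-\tfrac12|\nabla u|^2 g)))=e^{-2u}f$. The totally geodesic hypothesis is preserved under conformal change iff $\partial_\nu u=0$ on $\partial M$, which supplies the Neumann condition. Conditions $(C_1)$--$(C_4)$ make the equation elliptic and concave on the admissible cone $\Gamma$, so the linearization has the correct sign and we may apply a Hopf-type boundary lemma. For the continuity path I would interpolate between the given data $f$ and a constant associated with a reference admissible background metric (for instance by solving the $\sigma_1$-Yamabe problem first to place ourselves inside $\Gamma$), and run the standard open/closed dichotomy: openness follows from the implicit function theorem applied to the linearized operator with Neumann boundary data, using $(C_5)$ to guarantee uniform ellipticity along the path.

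Closedness is the heart of the argument and proceeds in the order $C^0\Rightarrow C^2\Rightarrow C^1\Rightarrow C^{m,\alpha}$. Given a $C^0$ bound on $u$, the global $C^2$ estimate would be obtained by applying the maximum principle to an auxiliary function of the form $W=\max_{|\xi|=1}(\nabla^2 u)(\xi,\xi)e^{\phi(|\nabla u|^2,u)}$: concavity $(C_2)$, the symmetry $(C_3)$, and the lower bound $(C_5)$ on $F_i$ control the good second-order terms, while $(C_6)$ (the Newton--Maclaurin inequality) absorbs the bad ones, mirroring the by-now-classical Guan--Wang/Gursky--Viaclovsky schemes. At the boundary, I would exploit the totally geodesic hypothesis to trade off mixed tangential-normal second derivatives of $u$ and construct a barrier from the squared distance to $\partial M$ to bound the double-normal derivative. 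A gradient estimate then comes by interpolation or by a direct Bernstein argument on $|\nabla u|^2$, and Evans--Krylov plus Schauder theory up to the boundary bootstrap $C^2$ to $C^{m}$ for any $m$.

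The main obstacle will be the $C^0$ estimate, where the non-hemisphere hypothesis must enter. I would argue by contradiction: assume a sequence of solutions $u_k$ with $\max |u_k|\to\infty$, rescale around a maximum or minimum point to obtain, after passing to a subsequence, a blow-up limit that is an entire admissible solution of $F(\lambda(A_{\hat g}))=\mathrm{const}$ either on $\mathbb R^n$ or on the closed half-space $\mathbb R^n_+$ with totally geodesic boundary condition. A Liouville-type theorem --- available through $(C_6)$ by comparison with the $\sigma_1$-case and the symmetry conditions $(C_2)$--$(C_3)$ --- forces the limit to be the standard bubble, whose Kelvin/stereographic image is exactly the round hemisphere. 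Combined with the assumption that $(M,g)$ is not conformal to a hemisphere, this produces a contradiction, yielding the uniform $C^0$ bound. Once all the a priori estimates are in hand, solvability follows from the continuity method and compactness of the full solution set in $C^m$ is immediate from the uniform $C^{m+\alpha}$ estimates together with Arzelà--Ascoli.
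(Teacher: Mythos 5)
Your overall architecture (reduce to a fully nonlinear Neumann problem in $u$, establish a priori estimates, run a homotopy argument, conclude compactness from the estimates) matches the paper, but your route diverges in three substantive ways, and one of them leaves a genuine gap.

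The gap is in your openness step. You propose the continuity method with openness supplied by the implicit function theorem. Linearizing $F(\lambda(g^{-1}A_{\tilde g}))-e^{-2u}f=0$ at a solution gives a second-order operator whose zeroth-order coefficient is $+2fe^{-2u}>0$ --- the \emph{wrong} sign for the maximum principle. There is no reason for the Neumann linearization to be injective (on the model case of the sphere or hemisphere the conformal group produces an actual kernel), so the IFT step can fail. The paper avoids this entirely by using Leray--Schauder degree (in the sense of Li, adapted to Neumann problems by S.~Chen): it constructs a one-parameter family of problems (\ref{3.1}), shows the $t=0$ problem has the unique solution $u\equiv0$ --- via a maximum-principle argument that uses $(C_6)$ and integration by parts against the Neumann condition --- so the degree is nonzero at $t=0$, then carries the nonzero degree to $t=1$ using the a priori estimates. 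No invertibility of the linearization is ever required. If you want to keep the continuity method you would need a separate argument that the kernel is trivial along the whole path, which is not available here.

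Beyond that, two differences in the $C^0$/blow-up analysis are worth naming. (i) You handle the boundary directly, proposing a half-space blow-up limit with a totally geodesic Neumann condition and a half-space Liouville theorem. The paper instead doubles $(M,g)$ across the totally geodesic boundary (getting a $C^{2,1}$ metric by Escobar's observation) and extends $w_k$ by even reflection; because $\partial_\nu u=0$, the extension is $C^2$ and still admissible, so the entire blow-up analysis reduces to the closed case of Trudinger--Wang. This sidesteps the need for any boundary Liouville statement. (ii) You rely on a full Liouville classification of the blow-up limit as the standard bubble, justified only loosely via $(C_6)$. The paper does not classify the limit; it establishes the weaker but sufficient asymptotic $w(x)=2\log d(x,\bar x)+o(1)$ near each blow-up point (Proposition 4.2) via comparison with minimal radial functions, and then invokes the Bishop--Gromov volume comparison (which is exactly why the hypothesis $\Gamma\subset\Sigma_{1/(n-2)}$, i.e.\ $\mathrm{Ric}_{\tilde g}\geq0$, is imposed) to conclude that a blow-up forces $(M,g)$ to be conformal to a hemisphere. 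Your proposal never uses volume comparison and never explains where $\Gamma\subset\Sigma_{1/(n-2)}$ enters, which is the mechanism that makes the non-hemisphere hypothesis bite. I would also note that the relevant blow-up is at points where $u\to-\infty$ (so $e^{-2u}\to\infty$), and the paper's Lemma 4.1 is precisely the technical device (following Trudinger--Wang) to show the blow-up point is approximated by \emph{local minima} of $u$; ``rescale around a maximum or minimum point'' glosses over this.

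Your sketch of the interior $C^2$ estimate via a Pogorelov-type test function and $(C_2)$--$(C_6)$ is in the right spirit and matches what the cited preprint [HS1] does; the paper simply imports Lemmas 3.1--3.2 rather than reproving them.
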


We can get the following corollary from Theorem 1.1 immediately. That
is to find a conformal metric $\tilde{g}$ with nonnegative $Ric_{\tilde{g}}$
such that
\begin{equation}
\det \left( \mu (\tilde{g}^{-1}Ric_{\tilde{g}})\right) =f^{n},  \tag{1.1}
\label{1.1}
\end{equation}%
where $\mu (\tilde{g}^{-1}Ric_{\tilde{g}})=\left( \mu _{1},\cdots ,\mu
_{n}\right) $ are the eigenvalues of $\tilde{g}^{-1}Ric_{\tilde{g}}$ and $%
f(x)$ is a positive function.

Since $Ric_{\tilde{g}}=(n-2)A_{\tilde{g}}+\sigma _{1}(\lambda (\tilde{g}%
^{-1}A_{\tilde{g}}))\tilde{g}$, if we define \\
$F(\lambda )=\sigma
_{n}^{1/n}\left( (n-2)\lambda +\left( \Sigma _{i=1}^{n}\lambda _{i}\right)
\right) $ and $\Gamma =\{\lambda \quad |\quad F(\lambda )>0\}$, then
\begin{equation*}
\begin{array}{ll}
\displaystyle & \det^{1/n}\left( \mu (\tilde{g}^{-1}Ric_{\tilde{g}})\right)
\\
\displaystyle= & \sigma _{n}^{1/n}\Big(\mu \Big(g^{-1}\Big[(n-2)(du\otimes
du-|\nabla u|^{2}g)+(n-2)\nabla ^{2}u+\triangle ug+Ric_{g}\Big]\Big)\Big) \\
=\displaystyle & F\Big(\lambda \Big(g^{-1}\Big[\nabla ^{2}u+du\otimes du-{%
\frac{1}{2}}|\nabla u|^{2}g+A_{g}\Big]\Big)\Big),%
\end{array}%
\end{equation*}%
where $\mu =(n-2)\lambda +\Sigma _{i=1}^{n}\lambda _{i}$. From the
definition, it is easy to verify that $F$ satisfying $(C_{1})-(C_{5})$,
since
\begin{equation*}
\frac{\partial F}{\partial \lambda _{i}}=\frac{\partial \left( \sigma
_{n}^{1/n}\right) }{\partial \mu _{s}}\left( 1+(n-2)\delta _{i}^{s}\right) ,
\end{equation*}%
and
\begin{equation*}
\frac{\partial ^{2}F}{\partial \lambda _{i}\partial \lambda _{j}}=\left(
1+(n-2)\delta _{i}^{s}\right) \frac{\partial ^{2}\left( \sigma
_{n}^{1/n}\right) }{\partial \mu _{s}\partial \mu _{t}}\left( 1+(n-2)\delta
_{j}^{t}\right) .
\end{equation*}%
Moreover, from
\begin{eqnarray*}
F\left( \lambda (\tilde{g}^{-1}A_{\tilde{g}})\right)  &=&\displaystyle\sigma
_{n}^{1/n}\left( \mu (\tilde{g}^{-1}Ric_{\tilde{g}})\right)  \\
&\leq &\frac{1}{n}\sigma _{1}\left( \mu (\tilde{g}^{-1}Ric_{\tilde{g}%
})\right)  \\
&=&\frac{2n-2}{n}\sigma _{1}\left( \lambda (\tilde{g}^{-1}A_{\tilde{g}%
})\right) ,
\end{eqnarray*}%
we know $F$ satisfies $(C_{6})$ with $\varrho =  \frac{2n-2 } {n}$. Thus (\ref%
{1.1}) turns out to be a proper equation with respect to Schouten tensor.
Furthermore, as \cite{GV1} and \cite{TW1}, by use of the volume comparison
theorem, $C^{0}$ estimate of the solutions of such an equation can be derived
if $Ric\geq 0$. In other words, the condition $\Gamma \subset \Sigma _{\frac{
1} {n-2}}$ ensures that the volume comparison theorem is applicable, where
the eigenvalues of Schouten tensor $\lambda $ satisfy  $(n-2)\lambda +\Sigma
_{i=1}^{n}\lambda _{i}\geq 0$ if and only if the eigenvalues of Ricci tensor
$\mu \geq 0$. Similarly, on the manifold with totally geodesic boundary,
based on the boundary $C^{1},C^{2}$ estimates with Neumann boundary
condition for general symmetric function (\cite{Cn2} or \cite{HS1}, etc.),
we can get

\begin{corollary}
 \textit{Let }$(M,g)$\textit{\ be a compact }$n$%
\textit{\ dimension Riemannian manifold with totally geodesic boundary and
the Ricci tensor is semi-positive definite. If it is not conformal
equivalent to a hemisphere, then for any positive function }$f$\textit{,
there exists a conformal metric }$\tilde{g}=e^{-2u}g$\textit{\ with totally
geodesic boundary and }$Ric_{\tilde{g}}\geq 0$\textit{\ and }%
\begin{equation*}
\det \left( \mu (\tilde{g}^{-1}Ric_{\tilde{g}})\right) =f^{n}.
\end{equation*}%
\textit{Additionally, the set of all such solutions is compact in the }$%
C^{m} $\textit{\ -topology for any }$m\geq 0$\textit{.}
\end{corollary}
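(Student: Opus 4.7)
The plan is to deduce the Corollary directly from Theorem 1.1 by using the specific symmetric function $F$ already constructed in the preceding discussion. Setting $\mu = (n-2)\lambda + \sum_{i=1}^n \lambda_i$ and $F(\lambda) = \sigma_n^{1/n}(\mu)$, the identity
\begin{equation*}
\mathrm{Ric}_{\tilde g} = (n-2)A_{\tilde g} + \sigma_1(\lambda(\tilde g^{-1}A_{\tilde g}))\,\tilde g
\end{equation*}
together with the conformal transformation formula (both recalled in the excerpt) rewrites the equation $\det(\mu(\tilde g^{-1}\mathrm{Ric}_{\tilde g})) = f^n$ as the Schouten-tensor equation $F(\lambda(\tilde g^{-1}A_{\tilde g})) = f$ for a fully nonlinear PDE in $u$. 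The Corollary is therefore the special case of Theorem 1.1 for this particular choice of $F$.

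Next I would specify the cone. Taking $\Gamma$ to be the connected component of $\{\lambda : F(\lambda) > 0\}$ on which $\sigma_n^{1/n}$ is concave, one finds $\Gamma = \{\lambda : \mu_i > 0 \text{ for all } i\} = \Sigma_{1/(n-2)}$. If $\lambda \in \Gamma_n$ then each $\lambda_i > 0$ and hence each $\mu_i = (n-2)\lambda_i + \sum_j \lambda_j > 0$, so the required chain $\Gamma_n \subset \Gamma \subset \Sigma_{1/(n-2)}$ holds. Conditions $(C_1)$--$(C_5)$ have already been checked in the preamble via the chain-rule formulas for $\partial F/\partial\lambda_i$ and $\partial^2 F/\partial\lambda_i\partial\lambda_j$, which transfer the corresponding properties of $\sigma_n^{1/n}$ on $\Gamma_n$ through the linear change of variables $\lambda \mapsto \mu$. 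Condition $(C_6)$ follows from the Newton--Maclaurin inequality $\sigma_n^{1/n}(\mu) \leq \frac{1}{n}\sigma_1(\mu) = \frac{2n-2}{n}\sigma_1(\lambda)$, yielding $\varrho = (2n-2)/n$ and the normalization $F(1,\ldots,1) = n\varrho$.

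With all structural hypotheses verified and $(M,g)$ not conformally equivalent to a hemisphere, Theorem 1.1 produces a totally-geodesic-boundary admissible metric $\tilde g = e^{-2u}g$ satisfying $F(\lambda(\tilde g^{-1}A_{\tilde g})) = f$, and the solution set is compact in every $C^m$-topology. Admissibility means $\lambda(\tilde g^{-1}A_{\tilde g}) \in \Gamma = \Sigma_{1/(n-2)}$, which is equivalent to $\mu(\tilde g^{-1}\mathrm{Ric}_{\tilde g}) > 0$ componentwise, hence $\mathrm{Ric}_{\tilde g} > 0$ (in particular $\geq 0$); unwinding the change of variables, $F(\lambda(\tilde g^{-1}A_{\tilde g})) = f$ is exactly $\det(\mu(\tilde g^{-1}\mathrm{Ric}_{\tilde g})) = f^n$.

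The only non-cosmetic obstacle I foresee is the use of the hypothesis $\mathrm{Ric}_g \geq 0$: this gives $\lambda(g^{-1}A_g) \in \overline{\Sigma_{1/(n-2)}} = \overline\Gamma$ rather than the open cone, so one should confirm that Theorem 1.1 either accepts closed-cone initial data or, equivalently, that a small conformal perturbation pushes $g$ into $\Gamma$ without affecting the existence or compactness conclusions. Since the semi-positivity is used in the background only to invoke the volume-comparison based $C^0$ estimate of \cite{GV1,TW1}, and the boundary $C^1$, $C^2$ estimates of \cite{Cn2,HS1} are insensitive to such a perturbation, this packaging step is routine, and no further work beyond citing Theorem 1.1 is required.
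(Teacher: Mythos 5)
Your proposal reproduces the paper's derivation of the Corollary, which in fact resides entirely in the discussion preceding its statement: the same choice $F(\lambda)=\sigma_n^{1/n}\bigl((n-2)\lambda+\sum_i\lambda_i\bigr)$ with $\Gamma=\Sigma_{1/(n-2)}$, the same identity $\mathrm{Ric}_{\tilde g}=(n-2)A_{\tilde g}+\sigma_1(\lambda)\tilde g$ translating the Ricci determinant equation into the Schouten form, the same chain-rule verification of $(C_1)$--$(C_5)$, and the same Newton--Maclaurin verification of $(C_6)$ with $\varrho=(2n-2)/n$. The worry you raise at the end is not actually an obstacle: Theorem 1.1 does not assume the background metric $g$ itself is $\Gamma$-admissible, since the deformation $(3.1)$ supplies admissibility at $t=0$ through the auxiliary $\varsigma g$ term with $u\equiv 0$, so the closed-cone hypothesis $\mathrm{Ric}_g\geq 0$ is imported as stated and no perturbation step is needed.
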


\begin{remark} The conformal problem with respect to the Ricci tensor has
been studied extensively. In \cite{LS} and \cite{GV3}, the authors studied
the negative Ricci curvature and proved that there exists a conformal metric
$\tilde{g}$ with negative Ricci tensor $Ric_{\tilde{g}}$ such that
\begin{equation*}
\det \left( \mu (\tilde{g}^{-1}Ric_{\tilde{g}})\right) =const..
\end{equation*}%
When the Ricci tensor is positive definite, in \cite{GW}, Guan and Wang
derived a conformal metric with a constant smallest eigenvalue of Ricci
tensor. In \cite{TW1}, Trudinger and Wang proved the prescribing problem of
positive Ricci tensor on closed manifold.
\end{remark}

This paper is organized as follows. We begin with some preliminaries in
Section 2. In Section 3, we will discuss the deformation and a priori
estimates. The proof of Theorem 1.1. is in Section 4. \vspace{3mm}

\section{PRELIMINARIES}

We introduce Fermi coordinates in a boundary neighborhood at first. In this
local coordinates, we take the geodesic in the inner normal direction $\nu =
\frac{\partial }{\partial x^{n}}$ parameterized by arc length, and $\left(
x^{1},...,x^{n-1}\right) $ forms a local chart on the boundary where $%
x^{n}=0 $. The metric can be expressed as
\begin{equation*}
g=g_{\alpha \beta }dx^{\alpha }dx^{\beta }+\left( dx^{n}\right) ^{2}.
\end{equation*}%
The Greek letters $\alpha ,\beta ,\gamma ,...$stand for the tangential
direction indices, $1\leq \alpha ,\beta ,\gamma ,...\leq n-1$, while the
Latin letters $i,j,k,...$ stand for the full indices, $1\leq i,j,k,...\leq n$
(See \cite{GrV} and \cite{Cn1}).

We denote the functions, tensors and covariant differentiations with respect
to the induced metric on the boundary by a $\mathit{bar}$(e.g. $\bar{\Gamma}%
_{\beta \gamma }^{\alpha }$, ${\bar{R}_{\alpha \beta }}$). Then the
Christoffel symbols on the boundary satisfy
\begin{equation*}
\bar{\Gamma}_{\alpha \beta }^{\gamma }=\frac{1}{2}g^{\gamma \delta }(\frac{%
\partial g_{\alpha \delta }}{\partial x^{\beta }}+\frac{\partial g_{\beta
\delta }}{\partial x^{\alpha }}-\frac{\partial g_{\alpha \beta }}{\partial
x^{\delta }})={\Gamma }_{\alpha \beta }^{\gamma },
\end{equation*}%
and ${\Gamma }_{nn}^{n}=0$, ${\Gamma }_{nn}^{\alpha }=0,$ ${\Gamma }%
_{n\alpha }^{n}=0$.

Let us denote $\frac{\partial }{\partial x^{i}}$ by $\partial _{i}$. The
boundary is called umbilic if the second fundamental form $L_{\alpha \beta
}=\tau g_{\alpha \beta }$, where $\tau $ is a function defined on $%
\partial M$. Since the boundary $\partial M$ is connected, by Schur Theorem,
$\tau =const.$. A totally geodesic boundary is umbilic with $\tau =0$.

Thus ${\Gamma }_{\alpha \beta }^{n}|_{\partial M}=L_{\alpha \beta }=\tau
g_{\alpha \beta }$ and ${\Gamma }_{n\beta }^{\alpha }|_{\partial
M}=-L_{\alpha \gamma }g^{\gamma \beta }=-\tau \delta _{\alpha }^{\beta }.\ \
$

Under the conformal metric $\tilde{g}=e^{-2u}g$, the functions, tensors and
the covariant differentiations with respect to $\tilde{g}$ denoted by a $%
\mathit{tilde}$ (e.g. $A_{\tilde{g}}$, ${\tilde{L}_{\alpha \beta }}$).

Let $\left[ g\right] $ be the set of metrics conformal to $g$. For $\tilde{g}%
=e^{-2u}g\in \left[ g\right] $, we consider the equation
\begin{equation}
F\left( \lambda (\tilde{g}^{-1}A_{\tilde{g}})\right) =f.  \tag{2.1}
\label{2.1}
\end{equation}
The Schouten tensor transforms according to the formula
\begin{equation*}
A_{\tilde{g}}=\nabla ^{2}u+du\otimes du-{\frac{1}{2}}|\nabla u|^{2}g+A_{g},
\end{equation*}%
where $\nabla u$ and $\nabla ^{2}u$ denote the gradient and Hessian of $u$
with respect to $g$. Consequently, ({\ref{2.1}}) is equivalent to
\begin{equation*}
F\Big(\lambda \Big(g^{-1}\Big[\nabla ^{2}u+du\otimes du-{\frac{1}{2}}|\nabla
u|^{2}g+A_{g}\Big]\Big)\Big)=f(x)e^{-2u}.
\end{equation*}

Then the second fundamental form satisfies
\begin{equation*}
\tilde{L}_{\alpha \beta }e^{u}=\frac{\partial u}{\partial \nu }g_{\alpha
\beta }+L_{\alpha \beta }.
\end{equation*}%
Note that the umbilicity is conformally invariant. When the boundary is
umbilic, the above formula becomes
\begin{equation*}
{\tilde{\tau}}e^{-u}=\frac{\partial u}{\partial \nu }+\tau ,
\end{equation*}%
where $\tilde{L}_{\alpha \beta }=\tilde{\tau}\tilde{g}_{\alpha \beta }$.

Therefore, whence the initial metric $g$ on manifold $M$ is with totally
geodesic boundary $\partial M$, the boundary of the manifold $M$ with
conformal metric $\tilde{g}=e^{-2u}g$ is still totally geodesic if and only
if $\frac{\partial u}{\partial \nu }=0$.

Therefore, in order to prove Theorem 1.1, we need to find admissible
solutions of the following equation

\begin{equation}
\left\{
\begin{array}{lr}
F\Big(\lambda \Big(g^{-1}\Big[\nabla ^{2}u+du\otimes du-{\frac{1}{2}}|\nabla
u|^{2}g+A_{g}\Big]\Big)\Big)=f(x)e^{-2u} & \text{in }M, \\
\frac{\partial u}{\partial \nu }=0\text{ \ \ \ \ \ \ \ \ \ \ \ \ \ \ \ \ \ \
\ \ \ \ \ \ \ \ \ \ \ \ \ \ \ \ \ \ \ \ \ \ \ \ \ \ \ \ \ \ \ \ \ \ \ \ \ }
& \text{on }\partial M.%
\end{array}%
\right.  \tag{2.2}  \label{2.2}
\end{equation}

\section{DEFORMATION, $C^1$  AND $C^2$ ESTIMATES }

To prove the existence of solution to the equation ({2.2}), we employ the
following deformation which defined in \cite{GV2}
\begin{equation}
\left\{
\begin{array}{lr}
F\Big(\lambda \Big(g^{-1}\Big[\varsigma (1-\psi (t))g+\psi (t)A_{g}+\nabla
^{2}u+du\otimes du-\frac{1}{2}|\nabla u|^{2}g\Big]\Big)\Big) &  \\
=\psi (t)f(x)e^{-2u}+(1-t)(\int e^{-(n+1)u})^{ {2}/{n+1}}\text{\ \ \  }  \text{
in }M, \\
\frac{\partial u}{\partial \nu }=0\text{ \ \ \ \ \ \ \ \ \ \ \ \ \ \ \ \ \ \
\ \ \ \ \ \ \ \ \ \ \ \ \ \ \ \ \ \ \ \ \ \ \ \ \ \ \ \  }
 \text{on }\partial M,%
\end{array}%
\right.  \tag{3.1}  \label{3.1}
\end{equation}%
where $\psi \in C^{1}[0,1]$ satisfies $0\leq \psi (t)\leq 1,\psi (0)=0$, $%
\psi (t)=1$ for $t\geq \frac{1}{2}$; and $\varsigma =(n\varrho
)^{-1}vol(M_{g})^{ \frac{2}{n+1}}$, where $F(1,\cdots ,1)=n\varrho $.

Similar as \cite{GV2}, at $t=1$, ({\ref{3.1}}) becomes ({\ref{2.2}}). While
at $t=0$, it becomes

\begin{equation*}
\left\{
\begin{array}{lr}
F\Big(\lambda\Big(g^{-1}\Big[\varsigma g+ \nabla ^{2}u + du\otimes du-\frac{1%
}{2} |\nabla u|^{2}g \Big]\Big)\Big)=(\int e^{-(n+1)u})^{\frac{2}{n+1}}\text{\  
}  \text{in }M, \\
\frac{\partial u}{\partial \nu }=0\text{ \ \ \ \ \ \ \ \ \ \ \ \ \ \ \ \ \ \
\ \ \ \ \ \ \ \ \ \ \ \ \ \ \ \ \ \ \ \ \ \ \ \ \ \ \ \ \ \ \ \ \ \ \ \ \ }
\text{on }\partial M.%
\end{array}%
\right.
\end{equation*}%
We can show that the above equation has a unique solution $u(x)\equiv 0$.

In fact, it is obvious that $u\equiv 0$ is a solution. Now we are going to
prove its uniqueness.

At the maximum point $x_{0}$ of $u$, no matter $x_{0}$ is interior or
boundary point, we always have that $\nabla u|_{x_{0}}=0,$ and $\nabla
^{2}u|_{x_{0}}$ is non-positive definite. In fact if $x_{0}$ is interior
point, it is clear; if $x_{0}$ is boundary point, we have $\frac{\partial u}{%
\partial \nu }|_{\partial M}=0$ by equation (\ref{3.1}), and $\frac{\partial
u}{\partial x^\alpha  }|_{x_0}=0$, where $\left\{ x^{\alpha }\right\}
_{1\leq \alpha \leq n-1}$ is a local coordinates on the boundary $\partial M$
around $x_{0}$. Therefore $\nabla ^{2}u|_{x_{0}}$ is non-positive definite.
Now at $x_{0}$ we have
\begin{eqnarray*}
vol(M_{g})^{\frac{2}{n+1}} &=&\varsigma \cdot n\varrho =\varsigma F(\lambda
(g^{-1}\cdot g)) \\
&\geq &F\Big(\lambda \Big(g^{-1}\Big[\varsigma g+\nabla ^{2}u+du\otimes du-%
\frac{1}{2}|\nabla u|^{2}g\Big]\Big)\Big) \\
&=&(\int e^{-(n+1)u})^{\frac{2}{n+1}}.
\end{eqnarray*}%
Similarly, at the minimum point of $u$, we can get $\varsigma \cdot n\varrho
\leq (\int e^{-(n+1)u})^{\frac{2}{n+1}}.$ As a result, we have $vol(M_{g})^{%
\frac{2}{n+1}}=\varsigma \cdot n\varrho =(\int e^{-(n+1)u})^{\frac{2}{n+1}}.$

By ($C_{6}$), we know $F\leq \varrho \sigma _{1}$. Hence,
\begin{eqnarray*}
\varsigma \cdot n\varrho &=&F\Big(\lambda \Big(g^{-1}\Big[\varsigma g+\nabla
^{2}u+du\otimes du-\frac{1}{2}|\nabla u|^{2}g\Big]\Big)\Big) \\
&\leq &\varrho \ \sigma _{1}\Big(\lambda \Big(g^{-1}\Big[\varsigma g+\nabla
^{2}u+du\otimes du-\frac{1}{2}|\nabla u|^{2}g\Big]\Big)\Big) \\
&=&\varrho \ \Big(n\varsigma +\triangle u+(1-\frac{n}{2})|\nabla u|^{2}\Big).
\end{eqnarray*}%
Then
\begin{equation*}
(\frac{n}{2}-1)\int_{M}|\nabla u|^{2}\leq \int_{M}\triangle u=\int_{\partial
M}\frac{\partial u}{\partial \nu }=0,
\end{equation*}%
and $u\equiv const.=0.$\newline

Thus the operator
\begin{eqnarray*}
\Psi _{t}[u] &=&F\Big(\lambda \Big(g^{-1}\Big[\varsigma (1-\psi (t))g+\psi
(t)A_{g}+\nabla ^{2}u+du\otimes du-\frac{1}{2}|\nabla u|^{2}g\Big]\Big)\Big)
\\
&&-\psi (t)f(x)e^{-2u}-(1-t)(\int e^{-(n+1)u})^{\frac{2}{n+1}}
\end{eqnarray*}%
satisfies Leray-Schauder degree $deg(\Psi _{0},\mathcal{O}_{0},0)\neq 0$ at $%
t=0$, where the Leray-Schauder degree is defined by \cite{Li1}(see \cite{Cn2}
for the boundary case) and $\mathcal{O}_{0}$ is a neighborhood of the zero
solution in $\{u\in C^{4,\alpha }(M):\frac{\partial u}{\partial \nu }=0\ $on
$\partial M\}$. Thus whence we obtain the homotopy-invariance of degree, we
can derive that the Leray-Schauder degree is nonzero at $t=1.$ This shows
that equation ({\ref{2.2}}) is solvable.\newline

The $C^{1}$ and $C^{2}$ estimates of the solutions to (\ref{3.1}) have been
proved in \cite{HS1}, we may obtain

\begin{lemma} \textit{For any fixed }$0<\delta <1$\textit{, there is a
constant }$C=C(\delta ,n,g,f)$\textit{\ such that any solution of (\ref{3.1}%
) with }$t\in \lbrack 0,1-\delta ]$\textit{\ satisfies }   $\|u\|_{C^{4,\alpha}} \leq C. $
\end{lemma}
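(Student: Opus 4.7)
The plan is to obtain the $C^{4,\alpha}$ bound through the standard hierarchy of a priori estimates for concave fully nonlinear elliptic equations with Neumann boundary data: a $C^0$ estimate exploiting the subcritical nonlocal term (this is the crucial step where $t\leq 1-\delta$ enters), the $C^1$ and $C^2$ estimates provided by \cite{HS1}, and finally an Evans--Krylov type $C^{2,\alpha}$ estimate boosted to $C^{4,\alpha}$ by Schauder bootstrapping.

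For the $C^0$ estimate I would evaluate (3.1) at the maximum $x_0$ of $u$; standard maximum-principle arguments (in the boundary case coupled to the Neumann condition, exactly as the authors have already used at $t=0$) give $\nabla u(x_0)=0$ and $\nabla^2 u(x_0)\leq 0$. By monotonicity $(C_5)$ the left-hand side of (3.1) is then dominated by $F\bigl(\lambda\bigl(g^{-1}[\varsigma(1-\psi(t))g+\psi(t)A_g]\bigr)\bigr)$, a quantity bounded above by a constant $C$ depending only on $g$. Since both summands on the right of (3.1) are non-negative and $1-t\geq\delta$, one obtains
\begin{equation*}
\int_M e^{-(n+1)u}\,dv_g \leq (C/\delta)^{(n+1)/2},
\end{equation*}
which together with the pointwise inequality $e^{-(n+1)u}\geq e^{-(n+1)\max u}$ forces $\max u\geq -C(\delta)$. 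The complementary bound $\min u\geq -C(\delta)$ then follows from the same integral estimate: if $\min u$ were very negative, the gradient control from \cite{HS1} would force $e^{-(n+1)u}$ to be large on a ball of positive radius around the minimum point, contradicting the integral bound above.

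With $\|u\|_{C^0}\leq C(\delta)$ secured, the $C^1$ and $C^2$ estimates of \cite{HS1} apply directly: the nonlocal term $(1-t)(\int e^{-(n+1)u})^{2/(n+1)}$ reduces to a bounded $x$-independent constant and does not interfere with the gradient/Hessian arguments. Concavity $(C_2)$ then places (3.1) in the Evans--Krylov--Trudinger framework to give interior $C^{2,\alpha}$ bounds, while the boundary $C^{2,\alpha}$ estimate with Neumann data follows from the standard reflection across the totally geodesic $\partial M$. Differentiating (3.1) once more yields a linear elliptic equation with H\"older-continuous coefficients and Neumann data for each first derivative of $u$, so Schauder theory upgrades the bound to $C^{4,\alpha}$ (and to $C^m$ for any $m$ by iteration). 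The main technical obstacle is the $C^0$ lower bound on $u$: the integral estimate alone cannot rule out arbitrarily thin negative spikes, so one must couple it with the gradient control from \cite{HS1} --- which for this class of equations depends only on an upper bound for $u$, not on $\inf u$ --- in order to convert the integral information into a pointwise bound without circularity.
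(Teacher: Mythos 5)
Your overall scheme --- a $C^0$ estimate exploiting the nonlocal term, local $C^1$/$C^2$ estimates from \cite{HS1}, then Evans--Krylov and Schauder bootstrap --- is the natural one and is evidently what the paper intends, although the paper gives essentially no argument beyond the citation of \cite{HS1}, so the substance has to be supplied.

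The gap in your proposal is in the $C^0$ step. You correctly derive the integral bound $\int_M e^{-(n+1)u}\,dv_g\leq(C/\delta)^{(n+1)/2}$ from the maximum principle together with $1-t\geq\delta$, and this does give $\sup_M u\geq -C(\delta)$. But your claim that $\inf_M u\geq -C(\delta)$ then follows because the gradient control of \cite{HS1} ``depends only on an upper bound for $u$, not on $\inf u$'' is contradicted by the paper's own Lemma 3.2: the local estimate (3.3) is
\begin{equation*}
\left(|\nabla^2 u|+|\nabla u|^2\right)(x')\leq C\left(r^{-2}+\exp\left(-2\inf_{B(x,2\sqrt{10}r)}u\right)\right),
\end{equation*}
which degenerates exponentially as $\inf u\to-\infty$. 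Consequently the integral bound does not exclude a thin negative spike $u\approx -L$: (3.3) only forces such a spike to have width of order $L e^{-L}$, and its contribution to $\int e^{-(n+1)u}$ is then of order $L^n e^{(n+1)L/2-nL}=L^n e^{-(n-1)L/2}$, which tends to $0$. So the integral information plus the interior gradient estimate cannot, by themselves, yield the pointwise lower bound; the circularity you flag is real and not resolved by the mechanism you propose. The way this is actually handled (in \cite{GV2}, and in Section 4 of this paper for $t\to 1$) is a rescaling and blow-up argument: if $\inf u_i\to-\infty$ along $t_i\leq 1-\delta$, one rescales at nearly-minimal points, uses the local estimates (3.3) and volume comparison to identify the limit as a standard sphere/half-sphere, and then contradicts the presence of the term $(1-t_i)(\int e^{-(n+1)u_i})^{2/(n+1)}\geq\delta\cdot(\cdots)$, which survives in the limit. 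You also leave the complementary upper bound $\sup_M u\leq C(\delta)$ unaddressed, and that requires a separate (if easier) argument. Once the $C^0$ estimate is in hand, the remaining steps you describe --- local $C^1$/$C^2$ from \cite{HS1}, concavity $(C_2)$ feeding Evans--Krylov, reflection across the totally geodesic boundary for the Neumann problem, and Schauder bootstrap to $C^{4,\alpha}$ --- are the correct and standard route.
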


So without loss of generality, we may assume that $u_{t_{i}}$ tends to $%
-\infty $ at $t_{i}\rightarrow 1$, where $u_{t_{i}}$ is the solution of ({%
\ref{3.1}}) at $t=t_{i}$ which will be denoted by $u_{i}$ in what follows.
Thus equation ({\ref{3.1}}) turns to be
\begin{equation}
\left\{
\begin{array}{lr}
\begin{array}{l}
F\Big(\lambda \Big(g^{-1}\Big[A_{g}+\nabla ^{2}u+du\otimes du-\frac{1}{2}%
|\nabla u|^{2}g\Big]\Big)\Big) \\
=(1-t)o+f(x)e^{-2u}\text{ }%
\end{array}
& \text{in }M, \\
\frac{\partial u}{\partial \nu }=0\text{ \ \ \ \ \ \ \ \ \ \ \ \ \ \ \ \ \ \
\ \ \ \ \ \ \ \ \ \ \ \ \ \ \ \ \ \ \ \ \ \ \ \ \ \ \ \ \ \ \ \ \ \ \ \ \ }
& \text{on }\partial M.%
\end{array}%
\right.  \tag{3.2}  \label{3.2}
\end{equation}%
where $u$ is assumed to be admissible, and $o\geq 0$ is a constant.\newline

Furthermore, we can get a more exact estimate on the geodesic ball $%
B(x,r)=\{y\in M\quad |\quad dist(x,y)<r\}$ :

\begin{lemma}(\cite{HS1}). \textit{Let }$u\in C^{4}(M)$\textit{\ be a }%
$k$\textit{\ -admissible solution of (\ref{3.1}) in }$B(x,r)$\textit{\ and }$%
0\leq r<1$\textit{\ . Then there is a constant }$C=C(n,g,f)$\textit{\ such
that }
\begin{equation}
\left( |\nabla ^{2}u|+|\nabla u|^{2}\right) (x^{\prime })\leq C\Big(%
r^{-2}+ \exp \left({-2 \inf_{B(x, 2 \sqrt{10} r)}u}\right)\Big).  \tag{3.3}  \label{3.3}
\end{equation}
\textit{for all }$x^{\prime }\in B(x,r)$\textit{.}
\end{lemma}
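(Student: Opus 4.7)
The plan is to prove (\ref{3.3}) by applying the maximum principle to an auxiliary function of the form
\[
W = \eta^{q}\bigl(\Lambda + |\nabla u|^{2}\bigr),
\]
where $\Lambda$ denotes the largest eigenvalue of the matrix $g^{-1}\bigl[A_g+\nabla^2 u + du\otimes du - \tfrac12|\nabla u|^2 g\bigr]$ appearing in (\ref{3.2}), $\eta\in C^\infty_0(B(x,2\sqrt{10}\,r))$ is a standard cutoff with $\eta\equiv 1$ on $B(x,r)$ and $|\nabla^k\eta|\le C\,r^{-k}$, and $q$ is an exponent to be chosen. The factor $2\sqrt{10}$ is expected to emerge from the Cauchy--Schwarz manipulation of the cross-terms $\nabla\eta\cdot\nabla(\text{second-order stuff})$ when they are absorbed into the dominant good term; I would leave the dilation constant and $q$ free until the computation fixes them.

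At an interior maximum $x_0$ of $W$, I would differentiate (\ref{3.2}) twice and hit $\log W$ with the linearized operator $\mathcal{L}=F^{ij}\nabla_{ij}$. Conditions $(C_1)$--$(C_4)$ make $\mathcal{L}$ concave and uniformly elliptic on the admissible cone, while $(C_5)$ supplies the critical lower bound $F^{ii}\ge \varepsilon F/\sigma_1$ that permits absorption of the gradient-type terms and of the second derivatives of $\eta$. Differentiating the right-hand side $f(x)e^{-2u}+(1-t)o$ produces a term of order $e^{-2u}$, and since $u(x_0)\ge \inf_{B(x,2\sqrt{10}\,r)}u$, this is exactly the source of the $\exp(-2\inf u)$ summand in (\ref{3.3}). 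The resulting second-order inequality at $x_0$, after dividing by $\Lambda$, yields (\ref{3.3}) on $B(x,r)$ since $\eta\equiv 1$ there.

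If the maximum of $W$ lies on $\partial M$, I would use the Neumann condition $\partial u/\partial\nu=0$ together with the totally geodesic boundary condition $L_{\alpha\beta}=0$ to compute $\partial W/\partial\nu$ at $x_0$. The boundary transformation formulas derived at the end of Section~2 (in particular $\tilde{L}_{\alpha\beta}e^{u}=(\partial u/\partial\nu)g_{\alpha\beta}+L_{\alpha\beta}$, and the commutation relations for $\nabla^2 u$ at an umbilic boundary) should force $\partial W/\partial\nu\le 0$, so the Hopf boundary point lemma rules out a boundary maximum. The main obstacle is really the interior computation: when $\mathcal{L}$ is applied to $\Lambda$, third-order commutator terms appear and $\Lambda$ itself is non-smooth where eigenvalues collide. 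I would smooth the latter problem by replacing $\Lambda$ with $\tfrac{1}{N}\log\operatorname{tr}\bigl(e^{N(\cdots)}\bigr)$ and passing to $N\to\infty$, and dispose of the third-order terms using the concavity $(C_2)$, which furnishes the negative definite contribution needed to close the argument; the rest of the bookkeeping is essentially that of the Chou--Wang/Guan--Wang local estimate adapted to the present weighted ellipticity $(C_5)$.
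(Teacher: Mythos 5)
The paper does not prove Lemma 3.2; it states it with a citation to the preprint \cite{HS1} and gives no argument, so there is nothing in this paper to compare your sketch against line by line. Judged on its own, your interior computation follows the standard local-estimate template (cutoff-weighted test function combining the largest eigenvalue $\Lambda$ and $|\nabla u|^2$, the linearized operator $F^{ij}\nabla_{ij}$, concavity $(C_2)$ to make the third-order commutator terms favourable, $(C_5)$ to absorb the cutoff and gradient errors, and $f e^{-2u}$ as the source of the $\exp(-2\inf u)$ term), which is indeed the right machinery for this class of fully nonlinear conformal equations.

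Two points should be corrected or sharpened. The cleaner, and almost certainly intended, treatment of the boundary is the one this very paper uses in Step~1 of the proof of Proposition~4.2: double $(M,g)$ across its totally geodesic boundary, extend $u$ by even reflection --- the Neumann condition $\partial u/\partial\nu=0$ makes the reflected function $C^2$ and the reflected $u$ remains admissible on the $C^{2,1}$ doubled manifold $\check M$ --- and then any boundary maximum becomes an interior maximum on $\check M$, reducing everything to the interior case. Your alternative via the Hopf boundary-point lemma is not set up correctly as stated: Hopf requires $W$ to be a sub-solution of some linear elliptic operator near the boundary point, which you have not arranged, and what the Neumann/totally-geodesic symmetry actually yields is $\partial W/\partial\nu = 0$ rather than a strict sign; one would then still have to carry out the full interior-type second-order test at the boundary point, which is precisely what reflection makes automatic. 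Finally, the dilation constant $2\sqrt{10}$ is not something one can ``expect'' from generic Cauchy--Schwarz bookkeeping; it records the specific nesting of balls in the iterated localization (compare the analogous local estimates in \cite{TW1}), so the argument should be run with an unspecified radius $ar$ and the constant read off only once the chain of inclusions is fixed.
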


\section{PROOF OF THEOREM 1.1.}

We call $\left\{ u_{k}\right\} $ the blow up sequence and $\bar{x}\in M$ the
blow up point, if $u_{k}({{x}_{0,k}})\rightarrow -\infty $ as $%
x_{0,k}\rightarrow \bar{x},$ where $\left\{ x_{0,k}\right\} \subset M$. Now
let $\left\{ u_{k}\right\} $ be a blow up solutions of (\ref{3.2}) with the
blow up point $\bar{x}$.

First of all, we would like to prove that $\bar{x}$ can be approximated by
local minimum points of $u_k$. Let $v_k=e^{-(n-2)/2 u_k}$, denote ${v}%
_k(x_{0,k})^{\frac{1}{n-2}}$ by $R_{0,k}$ and $\frac{1}{ 1-e^{-1/2}}$ by $%
A_0 $.

\begin{lemma} \textit{In each geodesic ball }$%
B(x_{0,k},A_{0}R_{0,k}^{-1})\subset M$\textit{\ we may find a local maximum
point of }$v_{k}$\textit{, named by }$x_{k}$\textit{. Furthermore,}
\begin{equation*}
v_{k}(x_{k})=\sup_{B(x_{k},{v}_{k}(x_{k})^{-\frac{1}{n-2}})}v_{k}.
\end{equation*}
\end{lemma}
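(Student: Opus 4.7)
The plan is an iterative blow-up selection argument in the spirit of Schoen's analysis for the Yamabe problem. Starting from $y_0 := x_{0,k}$, I would construct inductively a sequence $y_0, y_1, \ldots$ in $M$ as follows. Given $y_j$, set $\rho_j := v_k(y_j)^{-1/(n-2)}$. If the supremum of $v_k$ over $\overline{B(y_j, \rho_j)}$ is already attained at $y_j$, I stop and set $x_k := y_j$; the required sup identity is then immediate, and $x_k$ is automatically a local maximum of $v_k$ because the inequality $v_k \leq v_k(x_k)$ holds on an actual metric ball around $x_k$. Otherwise I select $y_{j+1} \in \overline{B(y_j, \rho_j)}$ with $v_k(y_{j+1}) > v_k(y_j)$.

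For the iterates to remain inside $B(x_{0,k}, A_0 R_{0,k}^{-1})$, I would enforce a geometric jump rule: whenever possible I pick $y_{j+1}$ so that $v_k(y_{j+1}) \geq e^{(n-2)/2} v_k(y_j)$, which yields $\rho_{j+1} \leq e^{-1/2} \rho_j$ and hence
\begin{equation*}
\mathrm{dist}(y_{j+1}, x_{0,k}) \leq \sum_{\ell = 0}^{j} \rho_\ell \leq \frac{\rho_0}{1 - e^{-1/2}} = A_0 R_{0,k}^{-1}.
\end{equation*}
This is exactly the role of the constant $A_0$, which is the sum of the geometric series with ratio $e^{-1/2}$. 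If at some step no such $y_{j+1}$ exists, i.e. $\sup_{\overline{B(y_j, \rho_j)}} v_k < e^{(n-2)/2} v_k(y_j)$, I fall back to choosing $y_{j+1}$ to be the actual maximizer of $v_k$ on this ball. Termination is then forced: since $v_k = e^{-(n-2)u_k/2}$ is smooth on the compact manifold $M$ it is bounded, and a strictly increasing sequence $\{v_k(y_j)\}$ cannot grow geometrically forever. If the algorithm instead produces infinitely many "small" jumps, I would extract a subsequential limit $y^\ast$ by compactness of $M$ and argue, using continuity of $v_k$ together with the defining supremum property of each $y_{j+1}$, that $y^\ast$ inherits the required local-max-with-sup-identity and can be taken as $x_k$.

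The main obstacle I anticipate is the sharpness of the final supremum identity. The geometric-jump phase of the algorithm naturally produces only a bound of the form $\sup_{B(x_k, \rho_k)} v_k \leq e^{(n-2)/2} v_k(x_k)$, and upgrading this to the exact equality required by the statement demands a careful local refinement. I would handle this by running a further "take-maximum-on-the-ball" subiteration starting from the candidate $x_k$ above: because the initial ratio of sup to center is already bounded, this refined iteration has total excursion $\leq \rho_k \sum_{j \geq 0} e^{-j/2}$, remains inside $B(x_{0,k}, A_0 R_{0,k}^{-1})$, and its terminal point (or a subsequential limit obtained from compactness) furnishes a genuine local maximum $x_k$ of $v_k$ satisfying $v_k(x_k) = \sup_{B(x_k, v_k(x_k)^{-1/(n-2)})} v_k$, as required.
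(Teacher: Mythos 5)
Your geometric-jump phase is essentially the same iteration the paper runs: you select $y_{j+1}$ with $v_k(y_{j+1}) \geq e^{(n-2)/2} v_k(y_j)$, obtain $\rho_{j+1} \leq e^{-1/2}\rho_j$, and control the total excursion by the geometric series that produces the constant $A_0 = 1/(1-e^{-1/2})$; this is correct and matches the paper. The gap is in the termination step. When you reach a $y_j$ with $\sup_{\overline{B(y_j,\rho_j)}} v_k < e^{(n-2)/2} v_k(y_j)$, you fall back to a ``take the maximizer on the ball'' subiteration and assert that it has total excursion $\leq \rho_k \sum_{j\geq 0} e^{-j/2}$. That bound is not justified: in the small-jump regime one only has $v_k(y_{j+1}) \geq v_k(y_j)$, hence $\rho_{j+1} \leq \rho_j$, but possibly $\rho_{j+1} > e^{-1/2}\rho_j$, so the radii need not decay geometrically. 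If the small-jump iteration does not stabilize, the sum $\sum_j \rho_j$ can diverge, the iterates can drift out of $B(x_{0,k}, A_0 R_{0,k}^{-1})$, and any subsequential limit need not lie in that ball. Nothing in a purely real-variable argument prevents this: a smooth positive function that increases slowly and monotonically along a geodesic through $y_j$ produces exactly such a divergent chain of small jumps. The statement is not a fact about arbitrary smooth functions; it relies on the PDE.

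The paper closes precisely this gap with a rigidity result. Once one reaches the small-jump regime, i.e.\ a point $x_{j(k),k}$ with $u_k \geq u_k(x_{j(k),k}) - 1$ on $B(x_{j(k),k}, \varepsilon_{j(k),k}^{1/2})$, the rescaled functions $\mu_{j,k}(y) = u_k(\mathcal{U}_{j,k}(y)) - \log\varepsilon_{j,k}$ are uniformly bounded, and by the local $C^2$ estimate (Lemma 3.2) they subconverge in $C^3$ to an entire solution $\mu_\infty$ on $\mathbb{R}^n$ of the conformally invariant equation. The Liouville theorem of \cite{LL1} classifies $\mu_\infty$ as a standard bubble, which has a single non-degenerate minimum; the $C^3$ convergence then forces $\mu_{j,k}$ to have a genuine local minimum near $0$ for large $k$, and rescaling back gives a local minimum of $u_k$ (hence a local maximum of $v_k$) satisfying the sup identity and lying inside $B(x_{0,k}, A_0 R_{0,k}^{-1})$. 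Your proposal never invokes the equation, the local estimate, or Liouville, and so the small-jump phase cannot be completed. To fix the proof you need to replace the ``take the maximizer'' subiteration with this blow-up/Liouville step (or some equivalent rigidity input such as a local Harnack inequality).
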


\begin{proof}

Let $e^{u_{k}(x_{0,k})}=\varepsilon _{0,k}.$ We define a
mapping:
\begin{eqnarray*}
\mathcal{U}_{0,k}:\mathcal{B}(0,{\varepsilon _{0,k}}^{-1/2})\subset
T_{x_{0,k}}(M) &\rightarrow &B(x_{0,k},{\varepsilon _{0,k}}^{1/2}) \\
y &\longmapsto &\exp _{x_{0,k}}(\varepsilon _{0,k}y),
\end{eqnarray*}%
where the metric on tangent space is ${\check{g}}_{k}=\varepsilon _{0,k}^{-2}%
\mathcal{U}_{0,k}^{\ast }{g}$ and $\mathcal{B}(0,{\varepsilon _{0,k}}%
^{-1/2}) $ is a geodesic ball. Moreover, consider a sequence of functions $%
\mu _{0,k}(y)=u_{k}(\mathcal{U}_{0,k}(y))-\log \varepsilon _{0,k}.$ We may
derive a equation that $\mu _{0,k}(y)$ satisfies. In fact, we have
\begin{equation*}
\left\{
\begin{array}{lr}
F\Big(\lambda \Big({\check{g}}_{k}^{-1}\Big[A_{{\check{g}}_{k}}+\nabla
^{2}\mu _{0,k}+d\mu _{0,k}\otimes d\mu _{0,k}-\frac{1}{2}|\nabla \mu
_{0,k}|^{2}{\check{g}}_{k}\Big]\Big)\Big) &  \\
=\varepsilon _{0,k}^{2}(1-t)o+f(\mathcal{U}_{0,k}(y))e^{-2\mu _{0,k}}
\ \ \ \ \ \ \ \ \ \ \ \ \text{in }\mathcal{B}(0,{\varepsilon _{0,k}}^{-1/2}), \\
\frac{\partial \mu _{0,k}}{\partial x^{n}}=0 \ \ \ \ \ \ \ \ \ \ \ \ \ \ \ \ \ \ \ \ \ \ \  \ \ \ \ \ \ \  \text{on }\mathcal{B}(0,{%
\varepsilon _{0,k}}^{-1/2})\cap \{x^{n}=0\}.%
\end{array}%
\right.
\end{equation*}%
where $\mu _{0,k}$ is admissible, and $o$ is a nonnegative constant.

Let us begin with the easy case $u_{k}\left( x\right) \geq u_{k}(x_{0,k})-1$
in $B(x_{0,k},{\varepsilon _{0,k}}^{1/2})$. In this case, $0\leq e^{-\frac{%
n-2}{2}\mu _{0,k}}\leq e^{\frac{n-2}{2}}$ in $\mathcal{B}(0,{\varepsilon
_{0,k}}^{-1/2})$. Hence, $\mu _{0,k}$ converges in $C^{3}$ to $\mu _{\infty
} $ with $0\leq e^{-\frac{n-2}{2}\mu _{\infty }}\leq e^{\frac{n-2}{2}}$ on $%
\mathbb{R}^{n}$. And the limit function $\mu _{\infty }$ satisfies
\begin{equation*}
F\Big(\lambda \Big(\delta ^{-1}\Big[\nabla ^{2}u+du\otimes du-\frac{1}{2}%
|\nabla u|^{2}\delta )\Big]\Big)\Big)=f(\bar{x})e^{-2u}.
\end{equation*}%
Then by the Liouville Theorem \cite{LL1}, we know that $0$ is the locally
minimum point of $\mu _{0,k}$. Rescaling back, we see that $x_{0,k}$ is the
locally minimum point of $u_{k}$ in $B(x_{0,k},{\varepsilon _{0,k}}^{1/2})$.

The alternative case is that there exists $x_{1,k}\in B(x_{0,k},{\varepsilon
_{0,k}}^{1/2})$ such that $u_{k}(x_{1,k})<u_{k}(x_{0,k})-1$. Then we may
consider the lower bound of $u_{k}$ in $B(x_{1,k},{\varepsilon _{1,k}}%
^{1/2}) $, where $\varepsilon _{1,k}=e^{u_{k}(x_{1,k})}<e^{-1}\varepsilon
_{0,k}$. If $u_{k}\geq u_{k}(x_{1,k})-1$ in $B(x_{1,k},{\varepsilon _{1,k}}%
^{1/2}),$ then $\mu _{1,k}(y)=u_{k}(\mathcal{U}_{1,k}(y))-\log \varepsilon
_{1,k}>-1$, where
\begin{equation*}
\mathcal{U}_{1,k}:y\rightarrow \exp _{x_{1,k}}(\varepsilon _{1,k}y),
\end{equation*}%
and $x_{1,k}$ is a locally minimum point of $u_{k}$.

Otherwise, we may repeat the previous proceedings with $%
u_{k}(x_{j,k})<u_{k}(x_{j-1,k})-1$ ($x_{j,k}\in B(x_{j-1,k},{\varepsilon
_{j-1,k}}^{1/2})$), $\varepsilon _{j,k}=e^{u_{k}(x_{j,k})}<e^{-1}\varepsilon
_{j-1,k}$ and $\mu _{j,k}(y)=u_{k}(\mathcal{U}_{j,k}(y))-\log \varepsilon
_{j,k}$, where
\begin{equation*}
\mathcal{U}_{j,k}:y\rightarrow \exp _{x_{j,k}}(\varepsilon _{j,k}y).
\end{equation*}%
For any given $k$, as $u_{k}\in C^{\infty }\left( M\right) $, there exists $%
j(k)\in \mathbb{N}$, $j\left( k\right) <\infty $ such that $u_{k}(x_{j\left(
k\right) ,k})<u_{k}(x_{j\left( k\right) -1,k})-1$ and $u_{k}\geq
u_{k}(x_{j\left( k\right) ,k})-1$ in $B(x_{j\left( k\right) ,k},{\varepsilon
_{j\left( k\right) ,k}}^{1/2})$. Hence, we can find a locally minimum point
of the $u_{k}$ in $B(x_{j\left( k\right) ,k},{\varepsilon _{j\left( k\right)
,k}}^{1/2})\subset B(x_{0,k},A_{0}{\varepsilon _{0,k}}^{1/2}).$ This
completes the proof (See Lemma 3.2 in \cite{TW1} for more details).
\end{proof}

Now we consider the rescaled sequence $w_{k}=u_{k}-\sup_{M}u_{k}$. Suppose $%
x_{k}^{0}$ is the maximum point of $u_{k}$. Since $e^{-2\sup
u_{k}}f(x_{k}^{0})=e^{-2u_{k}(x_{k})}f(x_{k}^{0})\leq C(\triangle
u_{k}+A_{g})(x_{k}^{0})\leq C$ thus $\bar{x}=\lim x_{k}$ is the blow up
point with respect to $w_{k}$ as well. It is obviously that $w_{k}$
satisfies the equation
\begin{equation*}
\left\{
\begin{array}{lr}
F\Big(\lambda \Big(g^{-1}\Big[A_{g}+\nabla ^{2}w_{k}+dw_{k}\otimes dw_{k}-%
\frac{1}{2}|\nabla w_{k}|^{2}g\Big]\Big)\Big) &  \\
=(1-t)o+f(x)e^{-2\sup_{M}u_{k}}e^{-2w_{k}}\text{ } & \text{in }M, \\
\frac{\partial w_{k}}{\partial \nu }=0\text{ \ \ \ \ \ \ \ \ \ \ \ \ \ \ \ \
\ \ \ \ \ \ \ \ \ \ \ \ \ \ \ \ \ \ \ \ \ \ \ \ \ \ \ \ \ \ \ \ \ \ \ \ \ \
\ } & \text{on }\partial M.%
\end{array}%
\right.
\end{equation*}%
where $w_{k}$ is admissible, and $o\geq 0$ is a constant.

By virtue of Lemme 4.1, we may assume $\bar{x}=\lim x_{k}$, where $\left\{
x_{k}\right\} $ are locally minimum points of $u_{k}$. Hence $\left\{
x_{k}\right\} $ are also locally minimum points of $w_{k}$ and
\begin{equation*}
w_{k}(x_{k})=\inf_{B(x_{k},e^{\frac{1}{2}{w}_{k}(x_{k})})}w_{k}.
\end{equation*}%
Note that $F$ satisfies $(C_{1})-(C_{6})$ and $w_{k}$ are $\Gamma $
admissible, where $\Gamma \subset \Sigma _{\frac{1}{n-2}}$. Hence $w_{k}$
are subharmonic and satisfy
\begin{equation}
W+\frac{1}{n-2}\sigma _{1}(W)g\geq 0,  \tag{4.1}  \label{4.1}
\end{equation}%
where $W=\nabla ^{2}w_{k}+dw_{k}\otimes dw_{k}-\frac{1}{2}|\nabla
w_{k}|^{2}g+A_{g}$. We need the idea of the minimal radial functions of $w$
in $B_{R}(x_{0})$ (\cite{TW1}):
\begin{equation*}
\widehat{w}(x)=\sup \{{w}(y):y\in \partial B_{r}(x_{0}),r=d(x,x_{0})\leq R\},
\end{equation*}%
and denote $\nabla ^{2}\widehat{w}+d\widehat{w}\otimes d\widehat{w}-\frac{1}{%
2}|\nabla \widehat{w}|^{2}g+A_{g}$ by $\widehat{W}$. Now we are ready to
prove the following

\begin{proposition} \textit{Let }$u_{j}$\textit{\ be a blow up
sequence of solutions to (\ref{3.2}). Then }$w_{j}=u_{j}-\sup_{M}u_{j}$%
\textit{\ converges in }$w^{1,p}$\textit{\ (for any }$1<p<\frac{n}{n-1}$%
\textit{\ ) to an admissible function }$w$\textit{. Moreover, if }$\bar{x}$%
\textit{\ is a blow up point of }$w$\textit{, then near }$\bar{x}$\textit{, }%
\begin{equation}
w(x)=2\log d(x,\bar{x})+o(1),  \tag{4.2}  \label{4.2}
\end{equation}%
\textit{where }$d(x,\bar{x})$\textit{\ denotes the geodesic distance from }$x
$\textit{\ to }$\bar{x}$\textit{\ with respect to the metric }$g$\textit{.
Furthermore, each blow up point is isolated.}
\end{proposition}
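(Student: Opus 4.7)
The plan is to establish the three conclusions — $W^{1,p}$ compactness, the logarithmic asymptotic, and isolation of blow-up points — in sequence, each building on the previous.

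\textbf{Stage 1.} Since $\sup_M w_k = 0$ we have $w_k \le 0$. Because $\Gamma \subset \Sigma_{1/(n-2)}$, the matrix inequality (4.1) holds; its trace reads $\Delta w_k \ge (n/2-1)|\nabla w_k|^2 - \sigma_1(A_g)$, a quasi-subharmonic inequality. To bound $\{w_k\}$ uniformly in $W^{1,p}$, I would apply the local gradient estimate (3.3) of Lemma 3.2 on dyadic geodesic annuli $B_{2\rho}(x_k) \setminus B_\rho(x_k)$ centered at the local minima $x_k$ produced by Lemma 4.1. At such a minimum the exponential term in (3.3) is controlled by the radial factor, so $|\nabla w_k| \le C\rho^{-1}$ on each annulus; raising to the $p$-th power and integrating gives a dyadic contribution $C\rho^{n-p}$, whose sum converges exactly when $p < n/(n-1)$. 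An analogous estimate bounds $|\nabla^2 w_k|$. A subsequence then converges weakly in $W^{1,p}$ and a.e.\ to an admissible limit $w$, admissibility being preserved because (4.1) passes to the distributional limit.

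\textbf{Stage 2.} Working in normal coordinates at $\bar{x}$, I would introduce the minimal radial function $\hat{w}(r)$, non-increasing in $r = d(x,\bar{x})$. Following the radial technique of \cite{TW1}, the radial and tangential eigenvalues of $\hat{W}$ are
\[
\hat{w}'' + \tfrac{1}{2}(\hat{w}')^2 + O(1) \quad\text{and}\quad \hat{w}'/r - \tfrac{1}{2}(\hat{w}')^2 + O(1),
\]
so (4.1) reduces to a scalar ODE inequality for $\hat{w}$. Writing $\hat{w}(r) = 2\log r + v(r)$ turns this into a linear inequality which, combined with $\hat{w}\le 0$ and the blow-up at $x_k \to \bar{x}$, forces $v = O(1)$. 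Upgrading to $v = o(1)$ uses a blow-up rescaling at $\bar{x}$ followed by the Liouville classification of \cite{LL1}, which identifies the rescaled limit as the radial model and pins down the leading constant. The matching lower bound on $w$ comes from Harnack applied to the quasi-subharmonic $w_k$ together with the local-minimum normalization at $x_k$ in Lemma 4.1.

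\textbf{Stage 3.} Suppose for contradiction that distinct blow-up points $\bar{y}_\ell \to \bar{x}$ exist, with $r_\ell = d(\bar{y}_\ell,\bar{x}) \to 0$. Choose $\rho_\ell$ with $\rho_\ell / r_\ell \to 0$ and $y_\ell \in \partial B_{\rho_\ell}(\bar{y}_\ell)$. Applying (4.2) at $\bar{x}$ gives $w(y_\ell) = 2\log r_\ell + o(1)$, while applying (4.2) at $\bar{y}_\ell$ gives $w(y_\ell) = 2\log\rho_\ell + o(1)$. Since $2\log\rho_\ell - 2\log r_\ell \to -\infty$, these are incompatible, proving that blow-up points are isolated.

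The main obstacle is Stage 2: converting the matrix inequality (4.1) on a curved manifold into a clean ODE for $\hat{w}$, controlling the $O(1)$ remainders from $A_g$ and the background curvature, and in particular promoting the $O(1)$ asymptotic to $o(1)$ via the Liouville step. Stages 1 and 3 are comparatively routine consequences once this analysis is in hand.
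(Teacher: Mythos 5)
Your Stage 2 is in the same spirit as the paper (minimal radial functions, reduction to a scalar ODE inequality, upper and lower bounds for $\widehat{w}$), but two substantive gaps separate your proposal from a correct proof.

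First, you never address the boundary, which is the paper's essential new step. The paper doubles $(M,g)$ across the totally geodesic boundary to obtain a closed $C^{2,1}$ manifold $(\check M,\check g)$, and extends $w_k$ by even reflection in Fermi coordinates; the Neumann condition $\partial_\nu w_k=0$ is exactly what makes $\check w_k\in C^2(\check M)$ and keeps it admissible, hence still satisfying (4.1). All the subsequent analysis (subharmonic compactness in $W^{1,p}$, minimal radial functions, comparison principle) is then quoted from \cite{TW1} on a closed manifold. Without doubling, your Stage 2 normal-coordinate computation at $\bar x$ has no meaning when $\bar x\in\partial M$, and the minimal radial function $\widehat w(r)=\sup_{\partial B_r}w$ would run into the boundary; none of the references you lean on are stated for the Neumann problem. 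This is not a technicality to be filled in — it is the part of the proof specific to the boundary-value problem.

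Second, your Stage 1 is circular. You invoke the local estimate (3.3), $(|\nabla^2 u|+|\nabla u|^2)(x')\le C\bigl(r^{-2}+\exp(-2\inf_{B(x,2\sqrt{10}r)}u)\bigr)$, on an annulus $B_{2\rho}(x_k)\setminus B_\rho(x_k)$ and assert that "the exponential term is controlled by the radial factor," giving $|\nabla w_k|\le C\rho^{-1}$. But controlling $\exp(-2\inf_{B}w_k)$ on such an annulus by $C\rho^{-2}$ is equivalent to the lower bound $w_k\ge 2\log\rho-C$ there, i.e. precisely the lower half of the asymptotic (4.2) you are supposed to prove in Stage 2. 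A priori one only knows $w_k\le 0$, and near the blow-up point $\exp(-2\inf w_k)\to\infty$, so (3.3) gives nothing useful on small annuli. The paper deliberately avoids any pointwise gradient estimate at this stage: it obtains the $W^{1,p}$ convergence purely from the fact that (4.1) makes $\check w_k$ subharmonic and $\check w_k\le 0$, invoking the compactness theory for subharmonic functions (Corollaries 2.1 and 2.2 of \cite{TW1}). Your Stage 3 contradiction argument is in the right spirit, but it too presupposes that (4.2) has been established uniformly near accumulating blow-up points, which should be argued together with, not after, Stage 2.
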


\begin{proof}Since a similar proposition on manifold without boundary
has appeared in \cite{TW1}, we only focus on the differences.

\textit{Step 1.} We may get admissible solutions on the doubled manifold.
Glue two copies of $(M,g)$ along the totally geodesic boundary together and
denote the doubling manifold by $\check{M}$. With the given smooth
Riemannian metric $g$ on $M$, there is a standard metric $\check{g}$ on $%
\check{M}$ induced from $g$. When $\partial M$ is totally geodesic in $(M,g)$%
, $\check{g}$ is $C^{2,1}$ on $\check{M}$ (see \cite{E}).

We can extend $w_{k}$ to a $C^{2}(\check{M})$ function $\check{w}_{k}$ as
follows: Near the boundary we take Fermi coordinates, $\check{w}_{k}$ is
then defined as
\begin{equation*}
\check{w}_{k}(x_{1},\cdots ,x_{n})=\left\{
\begin{array}{ll}
w_{k}(x_{1},\cdots ,x_{n}), & x_{n}\geq 0, \\
w_{k}(x_{1},\cdots ,-x_{n}), & x_{n}\leq 0.%
\end{array}%
\right.
\end{equation*}

Since $\frac{\partial w_{k}}{\partial \nu }=0$, it is easy to verify by
definition that $\check{w}_{k}\in C^{2}(\check{M})$. As matter of fact,
\begin{eqnarray*}
&&\lim_{x_{n}\rightarrow 0^{+}}\frac{\partial \check{w}_{k}}{\partial x^{n}}%
(x_{1},\cdots ,x_{n})=\frac{\partial w_{k}}{\partial x^{n}}(x_{1},\cdots
,x_{n-1},0) \\
&=&0=-\frac{\partial w_{k}}{\partial x^{n}}(x_{1},\cdots
,x_{n-1},0)=\lim_{x_{n}\rightarrow 0^{-}}\frac{\partial \check{w}_{k}}{%
\partial x^{n}}(x_{1},\cdots ,x_{n}),
\end{eqnarray*}%
and
\begin{equation*}
\lim_{x_{n}\rightarrow 0^{+}}\frac{\partial ^{2}\check{w}_{k}}{\partial
(x^{n})^{2}}(x_{1},\cdots ,x_{n})=\lim_{x_{n}\rightarrow 0^{-}}\frac{%
\partial ^{2}\check{w}_{k}}{\partial (x^{n})^{2}}(x_{1},\cdots ,x_{n}).
\end{equation*}%
Thus from the admissible property of ${w}_{k}$ we know that $\check{w}_{k}$
is also admissible and satisfies (4.1).

\textit{Step 2.} We can find convergent "minimal radial functions" on
doubled manifold. Inequality ({4.1}) says $\check{w}_{k}$ is subharmonic.
From Corollary 2.1 in \cite{TW1}, $\left\{ \check{w}_{k}\right\} $ converges
to a subharmonic function $\check{w}$ in $W^{1,p}$ (for any $1<p<\frac{n}{n-1%
})$). By Corollary 2.2 in \cite{TW1}, the corresponding minimal radial
functions $\widehat{\check{w}}_{k}$ also converge to $\widehat{\check{w}}$.
Note that the minimal radial functions depend only on distance to the
center, by Corollary 2.1 and Corollary 2.2 in \cite{TW1}, we may obtain
\begin{equation}
\widehat{\check{w}}\left( r\right) =\lim_{k\rightarrow \infty }\widehat{%
\check{w}}_{k}\left( r\right) ,  \tag{4.3}  \label{4.3}
\end{equation}%
where
\begin{equation*}
\widehat{\check{w}}_{k}(r)=\sup \{\check{w}_{k}(y):y\in \partial
B_{r}(x_{k})\},
\end{equation*}%
and
\begin{equation*}
\widehat{\check{w}}(r)=\sup \{\check{w}(y):y\in \partial B_{r}(\bar{x})\}.
\end{equation*}%
On the one hand, based on (4.3) and (4.1), we can get the following
estimates
\begin{equation}
\widehat{\check{w}}(x)\leq 2\log d(x,\bar{x})+C.  \tag{4.4}  \label{4.4}
\end{equation}%
In fact, we may assume $\widehat{\check{w}}_{k}(r)=\check{w}_{k}(x_{r})$ , $%
x_{r}=(0,\cdots ,0,r)$, $|A_{g}|\leq Cr/2$.  $\widehat{\check{w}}_{k}$ are
still admissible and satisfy inequality (\ref{4.1}). Thus
\begin{eqnarray*}
0 &\leq &\left( (n-2)\widehat{W}_{nn}+\displaystyle\Sigma _{i}\widehat{W}%
_{ii}\right) (x_{r})\allowdisplaybreaks \\
&\leq &(n-1)\left( \widehat{\check{w}}_{k}^{\prime \prime }+(\widehat{\check{%
w}}_{k}^{\prime })^{2}-\frac{g_{nn}}{2}(\widehat{\check{w}}_{k}^{\prime
})^{2}+Cr/2\right) \allowdisplaybreaks \\
&&+\displaystyle\Sigma _{i=1}^{n-1}\left( (\frac{1}{r}+C)\widehat{\check{w}}%
_{k}^{\prime }-\frac{g_{ii}}{2}(\widehat{\check{w}}_{k}^{\prime
})^{2}+Cr/2\right) \allowdisplaybreaks \\
&\leq &(n-1)\left( \widehat{\check{w}}_{k}^{\prime \prime }+\frac{1}{r}%
\widehat{\check{w}}_{k}^{\prime }+C(\widehat{\check{w}}_{k}^{\prime
}+r)\right) ,
\end{eqnarray*}%
where the last inequality comes from $\Sigma _{i}g_{ii}\geq n$. Hence,
\begin{equation*}
\left( \log (r\widehat{\check{w}}_{k}^{\prime }+r^{2})\right) ^{\prime
}+C\geq 0.
\end{equation*}%
By taking a limit we get (\ref{4.4}).

On the other hand, let $\widehat{\check{v}}_{k}=e^{-(n-2)/2\widehat{%
\check{w}}_{k}}$. From $\triangle \widehat{\check{v}}_{k}\leq C\widehat{%
\check{v}}_{k}r$, we get
\begin{equation*}
\lbrack r^{n-1}\widehat{\check{v}}_{k}^{\prime }]^{\prime }\leq Cr^{n}%
\widehat{\check{v}}_{k}
\end{equation*}%
Thus, by a direct calculation we know
\begin{equation*}
\widehat{\check{w}}(x)\geq 2\log d(x,\bar{x})+o(1).
\end{equation*}%
Therefore
\begin{equation}
\widehat{\check{w}}(x)=2\log d(x,\bar{x})+o(1).  \tag{4.5}  \label{4.5}
\end{equation}

Then the comparison principle helps us to deduce (4.2) from (4.5). Roughly
speaking, since $\check{w} $ equals $\widehat{\check{w}} $ at some points,
the comparison principle implies they are equal everywhere. That is
\begin{equation*}
\check{w}(x)=2\log d(x,\bar{x})+o(1).
\end{equation*}%
(For more details, one may consult section 3 of \cite{TW1}.) \end{proof}

\textit{\ Proof of Theorem 1.1.}

As the proof of Proposition 4.2, we glue two copies of $(M,g)$ together.
Denote the doubled manifold and functions by a "check" (e.g. $\check{M},%
\check{w}$). Since the Ricci curvature $Ric_{e^{-2\check{w}}g}$ is still
non-negative, by (\ref{4.5}) and Volume Comparison Theorem, there is at most
one end away from the blow up points; the metric $e^{-2\check{w}}g$ is in
fact a Euclidean one (see section 7 of \cite{GV1} for details), namely $(M,g)$ is conformally equivalent to the unit half sphere, which
contradicts with the assumption in Theorem 1.1. Therefore there is
a unform $L^{\infty }$ bound for solutions. So the set of solutions is
compact. This completes the proof of Theorem 1.1. $\ \ \ \ \ \ \ \ \ \ \
\square $

\begin{Acknowledgement}The first author would like to thank her advisor,
Professor Kefeng Liu, for his support and encouragement.
\end{Acknowledgement}

\end{document}